\documentclass[a4paper,12pt]{article}
\usepackage{amsmath,amsthm,amssymb,amsfonts} 
\usepackage{bbm,bm} 
\usepackage{latexsym} 
\usepackage{mathrsfs} 
\usepackage{dsfont}
\usepackage{threeparttable} 
\usepackage{tabularx} 
\usepackage{booktabs} 
\usepackage{multirow} 
\usepackage{pifont}
\usepackage{graphicx,subfigure} 
\usepackage{float}
\usepackage{color} 
\usepackage{indentfirst} 
\usepackage{geometry}
\renewcommand{\baselinestretch}{1.5} 
\usepackage{caption}
\usepackage{lineno} 
\usepackage{microtype}
\usepackage[numbers,sort&compress]{natbib} 
\usepackage[colorlinks,
            linkcolor=blue,
            citecolor=red
            ]{hyperref} 
\usepackage{epstopdf} 

\newtheorem{thm}{Theorem}[section]

\newtheorem{lem}[thm]{Lemma}

\newtheorem{que}[thm]{Question}
\newtheorem{proper}[thm]{Property}
\begin{document}

\setlength{\baselineskip}{20pt}

\begin{center}

{\Large \bf 
The number of perfect matchings in 3-connected planar graphs 
$^{\text{\ding{73}}}$}
\vspace{4mm}

{Wuxian Chen, Xinyu Dai, Fuliang Lu$^{\dagger}$}

\vspace{4mm}

\footnotesize{School of Mathematics and Statistics, Minnan Normal University,
Zhangzhou, 363000, PR China}

\renewcommand\thefootnote{}
\renewcommand{\baselinestretch}{1.2}
\footnote{$^{\text{\ding{73}}}$ This work is supported by NSFC\,(Grant No. 12271235),  Fujian Key Laboratory of Granular Computing and Applications (Minnan Normal University), Natural Science Foundation of Fujian Province (No. 2026J002034) and Institute of Meteorological Big Data-Digital Fujian.}

\renewcommand{\baselinestretch}{1.2}
\footnote{$^{\dagger}$ Corresponding author. E-mail addresses:
wuxian.chen@qq.com (W. Chen), daixinyu03520@163.com (X. Dai) and flianglu@163.com (F. Lu).}

\end{center}
\

\noindent {\bf Abstract}:
A graph is matchable if it admits a perfect matching. Recently, Goedgebeur et al. 
asked whether there exists a constant $c<12$ such that  infinitely many matchable planar $3$-connected graphs, each with exactly $c$ perfect matchings. We answer this question by proving that every matchable planar $3$-connected graph on at least 40 vertices has at least 12 perfect matchings, and this lower bound is sharp. 
\vspace{2mm}

\noindent{\it Keywords}: Perfect matching; matchable graph; planar graph
\vspace{2mm}


{\setcounter{section}{0}
\section{\normalsize Introduction}\setcounter{equation}{0}

All graphs considered in this paper are finite and contain no loops. We follow \cite{BM} for undefined notation and terminology. 
A {\em matching} of a graph is a set of pairwise nonadjacent edges. 
A \emph{perfect matching} $M$ of a graph is a matching such that every vertex is incident with exactly one edge of $M$.  A graph is \emph{matchable} if it has a perfect matching. Research concerning the number of perfect matchings in graphs, including in planar graphs, has been extensive. In the context of statistical mechanics (perfect matchings correspond to dimer coverings of lattices), Kasteleyn \cite{Kasteleyn} developed a polynomial-time algorithm to compute the number of perfect matchings for arbitrary planar graphs in the 1960s. Zaks \cite{Zaks} proved that every $k$-connected simple matchable graph has at least $k!!$ perfect matchings, and this lower bound is sharp for odd $k$. Later, Lov\'{a}sz \cite{L72} proved that every $k$-connected matchable simple graph that is not bicritical has at least $k!$ perfect matchings, where a graph on at least 4 vertices is \emph{bicriticcal} if the removal of any two distinct vertices results in a matchable graph. Chudnovsky and Seymour \cite{CS} showed that all bridgeless cubic planar graphs have exponentially many perfect matchings in terms of the number of vertices. This result was extended to arbitrary bridgeless cubic graphs by Esperet et al. \cite{EKKKN}, which was conjectured by Lov\'{a}sz and Plummer. More recently, Goedgebeur et al. \cite{GJVZ} investigated the minimum non-zero number of perfect matchings in planar graphs. They proved that there is a positive constant number of perfect matchings for $2$-connected simple planar graphs of minimum degree $3$ and $3$-connected planar graphs. Furthermore, they raised the following question. 

\begin{que}[\cite{GJVZ}]\label{question}
Is there a constant $c<12$ such that there exist infinitely many matchable planar $3$-connected graphs, each with exactly $c$ perfect matchings ?
\end{que}

In this paper, we answer the above question by proving the following main result.  

\begin{thm}\label{main}
Let $n\geq 40$ be an even integer. Then every matchable planar $3$-connected graph on $n$ vertices has at least 12 perfect matchings. 
\end{thm}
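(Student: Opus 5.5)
We will use matching theory, specifically the ear decomposition / brick--brace machinery, together with induction on $n$.

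\textbf{Reduction to matching covered graphs.} Let $G$ be a matchable planar $3$-connected graph on $n\ge 40$ vertices, and let $\Phi(G)$ denote its number of perfect matchings. An edge is \emph{allowed} if it lies in some perfect matching. Deleting all non-allowed edges leaves a spanning subgraph $H$ with $\Phi(H)=\Phi(G)$. By the Dulmage--Mendelsohn / Kotzig structure, $H$ splits into elementary components, and $\Phi(G)$ is the product of $\Phi$ over these components. We then argue by cases.

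\textbf{Case 1: $G$ itself is not elementary.} Then some barrier $B$ of $G$ has at least two vertices. Since $G$ is $3$-connected and planar, we will show that the components of $G-B$, and the tight cuts they induce, force at least two elementary pieces each having several perfect matchings, or one large piece. For instance, an even component attached by at least $3$ edges yields a factor of at least $3$ from the choices across the cut, and the remaining piece gives a further factor of at least $4$.

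\textbf{Case 2: $G$ is matching covered.} Take the tight cut decomposition of $G$ into bricks and braces.
\begin{itemize}
\item If $G$ is bicritical (a brick), we use Lovász's bound, $\Phi\ge k!=6$ for non-bicritical graphs and $k!!=3$ in general, only as a starting point. We then exploit planarity and size: by Edmonds--Lovász--Pulleyblank, a brick on $n$ vertices has dimension of its perfect matching lattice equal to $m-n+2-b$, where $b$ is the number of bricks. By Euler's formula and minimum degree $3$, $m\ge 3n/2$. Hence the matching polytope has dimension at least $n/2+1\ge 21$, so it has at least $22$ vertices, which gives $\Phi\ge 22>12$ for a brick with $n\ge 40$.
\item More generally, for any matching covered graph the polytope dimension is $m-n+1-b$. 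So we need to bound $b$, the number of bricks in the tight cut decomposition, against $m-n$. In a $3$-connected planar graph, each nontrivial tight cut contracts to pieces that are $3$-connected planar graphs, for example by the ELP theorem that nontrivial tight cuts in bicritical graphs come from 2-separations. The key inequality to prove is $m-n+1-b\ge 11$ when $n\ge 40$. For each brace or brick piece $J$, the quantity $m(J)-n(J)$ is additive across the tight cut contractions up to a correction of $2$ per cut. Bricks contribute $m(J)-n(J)\ge n(J)/2$. Braces are bipartite, and planar bipartite $3$-connected graphs have $m\ge 3n/2$ as well.
\end{itemize}

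\textbf{Main obstacle.} The hardest part is controlling the tight cut decomposition when it has many small pieces, for example many $K_4$'s or cubes glued along tight cuts. In that situation the dimension bound could drop, and this is exactly where examples with exactly $12$ perfect matchings live. There we switch to counting directly. Each tight cut $C$ gives $\Phi(G)\ge \max_{e\in C}$ of the product structure, and more usefully
\[
\Phi(G)=\sum_{e\in C}\Phi_1(e)\,\Phi_2(e),
\]
where $\Phi_i(e)$ counts perfect matchings of the two $C$-contractions that contain $e$. Since $|C|\ge 3$ and each contraction is a smaller $3$-connected planar matchable graph, an inductive bound on the minimal counts over small graphs, checked by computer for $n<40$ or derived from a finite list, together with the fact that the sum has at least three positive terms, should give $12$.

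The threshold $40$ suggests that the small cases genuinely fail. So the final step is a careful case analysis: small leftover bricks, namely $K_4$, the prism and $\overline{C_6}$, with known counts $3$, $4$, $4$. This analysis shows that a graph on $40$ or more vertices either has a decomposition with enough pieces to multiply to at least $12$, or has a large brick or brace with polytope dimension at least $11$.
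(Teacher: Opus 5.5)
There is a genuine gap: the case that actually carries the theorem, where $G$ has forbidden edges, is not handled.

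\textbf{Case 1 is wrong at its first step.} If $B$ is a barrier of a matchable graph, every perfect matching matches the $|B|$ vertices of $B$ into the $|B|$ distinct odd components of $G-B$. So each even component is matched internally, and every edge between an even component and $B$ (and every edge inside $B$) is forbidden. An even component attached by three edges therefore gives no choices at all.

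\textbf{Your case split omits a class.} Graphs that are elementary but not matching covered are never treated, for example a planar bipartite matching covered graph with planar chords added inside one colour class. In this class, and in your Case 1, $F(G)$ need not be $3$-connected or have minimum degree $3$. So $|E(F(G))|\ge 3n/2$ fails and your dimension count collapses.

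\textbf{This is where the extremal graphs live.} For matching covered $3$-connected $G$, the Edmonds--Lov\'asz--Pulleyblank consequence $\Phi(G)\ge (|E(F(G))|-n)/2+2$, together with $2|E(G)|\ge 3n$, already gives $\Phi(G)\ge n/4+2$. This is where the threshold $40$ comes from. Hence for $n>40$, every graph with exactly $12$ perfect matchings has forbidden edges.

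\textbf{The missing idea is the paper's barrier reduction.}
Saturate $G$ to $G^*$, which has the same perfect matchings, and take the saturated elementary base $G_0^*$ from the Cathedral Theorem. If $G_0^*$ is bicritical, $3$-connectivity forces $G=G^*=G_0^*$, a brick, and the bound above applies. Otherwise a nontrivial class $S\in\mathcal{P}(G_0^*)$ is a barrier of $G^*$. Then $|S|\ge 3$, and every odd component of $G_0^*-S$ has at least three neighbours in $S$. Shrinking these factor-critical components yields a planar bipartite matching covered graph $H=G_0^*(S)$. Its perfect matchings extend to distinct perfect matchings of $G$. The bound $\Phi(H)\ge 12$ then comes from:
\begin{itemize}
\item the de Carvalho--Lucchesi--Murty bounds for bipartite matching covered graphs of minimum degree $3$;
\item retracts, to absorb degree-$2$ vertices of $S$;
\item $K_{3,3}$-exclusion, to rule out the small exceptions;
\item a separate argument through a nontrivial odd component when $H\cong B_8$.
\end{itemize}

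\textbf{Your matching covered case is also not correct as written.} The claimed additivity of $m(J)-n(J)$ ``up to a correction of $2$ per cut'' is false. For a tight cut $C$ with $C$-contractions $G_1,G_2$ one has $n_1+n_2=n+2$ and $m_1+m_2=m+|C|$. Hence $m-n=(m_1-n_1)+(m_2-n_2)-(|C|-2)$, and since $|C|$ is unbounded you never reach $b\le m-n-10$. You do not need brick counting here at all: the bound quoted above settles every matching covered $G$ with $n\ge 40$.

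Your fallback for the ``main obstacle'' is not an argument either:
\begin{itemize}
\item The $C$-contractions need not have at least $40$ vertices, and for small graphs the statement is false: $K_4$, the triangular prism and the cube have $3$, $4$ and $9$ perfect matchings. So induction on $n$ gives nothing.
\item Three positive terms in $\Phi(G)=\sum_{e\in C}\Phi_1(e)\Phi_2(e)$ only give $\Phi(G)\ge 3$.
\item The $3$-connectivity of the contractions is asserted without proof.
\end{itemize}

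Minor points: $\overline{C_6}$ is the triangular prism, so your list of small bricks double-counts. The perfect matching polytope of a brick has dimension $m-n$ (the lattice rank is $m-n+1$), so the brick bound is $\Phi\ge n/2+1$. That is still ample.
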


Moreover, the lower bound in Theorem \ref{main} is sharp: for every even integer $n\geq 10$, Goedgebeur et al. \cite{GJVZ} constructed a simple planar 3-connected graph with $n$ vertices and precisely 12 perfect matchings. 

In the next section, we will give some notions and useful results.  
In Section 3, we give the proof of Theorem \ref{main}.

\section{\normalsize Preliminaries}

\subsection{Basic notions and results}
Let $G$ be a graph with vertex set $V(G)$ and edge set $E(G)$. If $G$ has multiple edges, then we write $G_{\text{sim}}$ for the underlying simple graph of $G$. 
The \emph{degree} of a vertex $v$ in $G$, denoted by $d_G(v)$, is the number of edges of $G$ incident with $v$. A \emph{k-degree vertex} is a vertex of degree $k$. 
Let $\delta(G)$ be the minimum degree of $G$. For a set $S\subseteq V(G)$, denote by $N_G(S)$ the set of all vertices in $V(G)\setminus S$ adjacent to a vertex of $S$. If $|S|=1$, say $S=\{u\}$, then $N_G(u):=N_G(\{u\})$ denotes the set of neighbors of $u$ in $G$. For two vertex subsets $X,Y\subseteq V(G)$, denote by $E_G(X,Y)$ the set of all edges of $G$ with one end in $X$ and the other in $Y$. 

An edge of a graph is \emph{allowed} if it  lies in a perfect matching, and \emph{forbidden} otherwise. For a matchable graph $G$, denote by $\Phi(G)$ the number of perfect matchings in $G$. A useful lower bound on $\Phi(G)$ is given by the following lemma. 

\begin{lem}[Consequence of Corollary 5.13 in \cite{EPL}]\label{formula}
Let $G$ be a matchable graph on at least 4 vertices. Then
$$\Phi(G)\geq\frac{|E(F(G))|-|V(G)|}{2}+2,$$ 
where $F(G)$ denotes the subgraph of $G$ induced by all allowed edges.
\end{lem}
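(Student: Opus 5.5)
The statement is Lemma~\ref{formula}, so the plan is to derive it from the ear-decomposition theory of matching covered graphs in \cite{EPL}. The first step is to reduce to $F(G)$. Every perfect matching of $G$ uses only allowed edges, so $\Phi(G)=\Phi(F(G))$, and $V(F(G))=V(G)$. The graph $F(G)$ is a spanning subgraph in which every edge is allowed, so each of its components is matching covered (connected, with every edge in a perfect matching). It therefore suffices to prove that a graph $H$ on $n\geq 4$ vertices whose components $H_1,\dots,H_k$ are all matching covered satisfies
$$\Phi(H)\geq \frac{|E(H)|-n}{2}+2 .$$

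For a single matching covered graph $H_i$ with $n_i$ vertices and $m_i$ edges, I would invoke the bound behind Corollary~5.13 of \cite{EPL}: the dimension of the perfect matching polytope is $m_i-n_i+1$ when $H_i$ is bipartite and $m_i-n_i$ otherwise. Since the polytope has $\Phi(H_i)$ vertices, this gives $\Phi(H_i)\geq m_i-n_i+1$ in the bipartite case and $\Phi(H_i)\geq m_i-n_i+1$ again in the nonbipartite case, because a nonbipartite matching covered graph has $m_i\ge n_i+1$ and hence dimension at least one. So in all cases $\Phi(H_i)\ge m_i-n_i+1$. The same conclusion also follows combinatorially from the two-ear theorem, since each ear adds at least one new perfect matching, and I would keep that route in reserve. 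Setting $x_i=m_i-n_i+1\geq 1$ (for $K_2$, $x_i=0$ and $\Phi=1$), we get $\Phi(H_i)\ge \max(1,x_i)$.

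The last step is to combine the components. We have $\Phi(H)=\prod_i \Phi(H_i)\geq\prod_i\max(1,x_i)$, and we need this to be at least $\tfrac12\sum_i (x_i-1)+2$.
\begin{itemize}
\item If $k=1$ and $n\ge4$, we need $x\ge (x-1)/2+2$, which is $x\ge 3$. When $H$ is connected and matching covered on at least four vertices, it is $2$-connected, so $m\ge n$. Then $x=1$ only if $H$ is an even cycle, which has $\Phi=2=(0/2)+2$ and holds. For $x=2$ the target is $2.5$, so we need $\Phi\geq3$, which follows from the refined ear bound since a bipartite graph with two ears beyond the first cycle has at least $3$ perfect matchings.
\item For several components, the inequality $\prod a_i\ge \sum (a_i-1)/2+2$ for $a_i\ge1$ follows by induction: multiplying by a factor $a\ge2$ at least doubles the product, which outpaces the linear growth. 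Components equal to $K_2$ contribute $-\tfrac12$ each and only help.
\end{itemize}

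The main obstacle is handling the small cases exactly, namely even cycles and sums with many $K_2$ components, and matching the precise form of Corollary~5.13 so that the additive constant $2$ comes out.
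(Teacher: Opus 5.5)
There is a genuine gap in your per-component bound. The Edmonds--Lov\'asz--Pulleyblank theorem does not say that the perfect matching polytope of a nonbipartite matching covered graph has dimension $m-n$. It says the dimension is $m-n+1-b$, where $b$ is the number of bricks in a tight cut decomposition, and $b$ can exceed $1$. So $\Phi(H_i)\ge m_i-n_i+1$ is false in general.

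Here is a counterexample. Take disjoint triangles $pqr$ and $stu$ and two further vertices $x,y$, with $x$ adjacent to $p,q,s,t$ and $y$ adjacent to $r,u$. This simple nonbipartite graph is matching covered, with $n=8$ and $m=12$. It has exactly four perfect matchings: $y$ goes to $r$ or $u$, and then $x$ must go into the other triangle in one of two ways. Hence $\Phi=4<5=m-n+1$. Here $\{x,y\}$ is a barrier, the bricks are two copies of $K_4$, and the polytope has dimension $3$, not $4$.

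Your fallback via the two-ear theorem fails for the same reason. A double ear raises $m-n$ by $2$ but is only guaranteed to create one new perfect matching, so counting ears instead of ear-adding steps overcounts.

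What is true, and is really the whole content of the lemma, is $\Phi(H_i)\ge \frac{m_i-n_i}{2}+2$ for every component $H_i$ other than a single edge. To prove it, choose an ear decomposition $K_2=G_1\subset G_2\subset\cdots\subset G_r=H_i$ whose first step is a single ear. This is always possible, and then $G_2$ is an even cycle with $m-n=0$ and $\Phi=2$. Every later step increases $m-n$ by at most $2$. It also increases $\Phi$ by at least $1$: perfect matchings of $G_j$ extend along the new ears, and matching coveredness of $G_{j+1}$ supplies a perfect matching using an end edge of a new ear. In terms of the polytope formula, what you need is $b\le (m-n)/2$.

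Your combination step also needs repair. By replacing $\Phi(H_i)$ with $\max(1,x_i)$ you throw away $\Phi(C_{2k})=2$. The inequality $\prod a_i\ge\frac12\sum(a_i-1)+2$ for $a_i\ge 1$ is false: two even-cycle components give $1$ against $2$. Instead, put $y_i=\frac{m_i-n_i}{2}+2\ge 2$ for the nontrivial components and use $\prod y_i\ge\sum(y_i-2)+2$, which follows inductively from $(S-1)(y-1)+1>0$. Single-edge components contribute a factor $1$ on the left and $-\frac12$ on the right. If every component is a single edge, $n\ge 4$ gives at least two of them, so the right side is at most $1$.
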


A connected graph with at least two vertices is \emph{matching covered} if each edge is allowed. An edge $e$ of a matching covered graph $G$ is {\em removable} if $G-e$ is matching covered, and nonremovable otherwise. 

\begin{lem}[\cite{HLZ}]\label{removable}
Let $G$ be a matching covered bipartite graph on at least $4$ vertices.
If $\delta(G)\geq3$, then the subgraph induced by all nonremovable edges of $G$ is a forest (a graph that contains no cycles).
\end{lem}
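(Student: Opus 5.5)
The statement to prove is Lemma \ref{removable}: in a matching covered bipartite graph $G$ with $\delta(G)\ge 3$, the nonremovable edges span a forest. I plan to argue by contradiction. Suppose the nonremovable edges contain a cycle $C$; since $G$ is bipartite, $C$ is even.

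\textbf{Step 1 (characterize nonremovable edges).} Let $G$ have bipartition $(A,B)$ with $|A|=|B|$. By Hall's theorem, a connected bipartite graph is matching covered iff $|N(X)|\ge |X|+1$ for every nonempty proper $X\subsetneq A$. Hence an edge $e$ is nonremovable iff there is a nonempty proper $X\subset A$ with
\begin{itemize}
\item[(i)] $|N_G(X)|=|X|+1$, and
\item[(ii)] $e$ is the only edge from some vertex $b\in N(X)$ into $X$.
\end{itemize}
After deleting $e$, the set $X$ is tight, and every edge from $N(X)\setminus N_{G-e}(X)$ to $A\setminus X$ becomes forbidden. Equivalently, $\partial(X\cup N(X)-b)$ is a tight-cut-like barrier.

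\textbf{Step 2 (choose an extremal witness).} For each edge $e$ of $C$, choose a witness set $X_e$ minimal by inclusion, or pick one edge $e\in C$ whose witness $X_e$ is minimal among all edges of $C$. Then use submodularity of $|N(\cdot)|$:
\[
|N(X\cap Y)|+|N(X\cup Y)|\le |N(X)|+|N(Y)|.
\]
This gives an uncrossing: if two witnesses cross and both intersection and union are nonempty proper subsets, then both are also "tight+1". I would use this to show that the witnesses of edges along $C$ can be taken nested or disjoint.

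\textbf{Step 3 (derive the contradiction).} Let $e=ab$ with $a\in X$, so $b$ has exactly one neighbour in $X$. Follow the cycle $C$: it must leave the vertex set $X\cup N(X)$ and return. Each crossing edge of $C$ is itself nonremovable with its own witness. The minimality of $X$, combined with uncrossing, should force a witness $Y\subsetneq X$ for some edge of $C$ inside, contradicting minimality.

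Failing that, I would count degrees. Inside the minimal $X$, the subgraph $G[X\cup N(X)\setminus\{b\}]$ is a balanced bipartite graph. Every vertex of $X$ has all its $\ge 3$ neighbours there, which makes it "matching covered with a unique deficiency". Minimality then makes every edge inside it removable, so no cycle edge lies inside $X$. This pushes $C$ entirely onto the boundary edges. Each boundary vertex $b'\in N(X)$ carries at most one edge into $X$ of this special type, and a cycle needs two cycle edges at each vertex. That yields a leaf-like structure, which is the contradiction.

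\textbf{Main obstacle.} The hard step is the uncrossing argument, i.e.\ showing that the witnesses for consecutive cycle edges interact well. The degree condition $\delta\ge 3$ is needed exactly here: without it, even cycles themselves are counterexamples. I would first try an induction on $|V(G)|$ via tight-cut contraction. Contract $X\cup N(X)\setminus\{b\}$ (a tight side) to a single vertex, check that $\delta\ge 3$ is preserved, and check that nonremovable edges correspond. The result would then follow from the brick-free case, because bipartite graphs with no nontrivial tight cut are braces, in which every edge is removable when $\delta\ge 3$ and $|V|\ge 6$.
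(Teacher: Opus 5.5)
\textbf{Verdict: there is a genuine gap.} The paper takes this lemma from \cite{HLZ} without proof, so your argument has to stand on its own. As written it does not.

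\emph{The direct argument (Steps 2--3).} The decisive claim, that minimality of $X$ makes every edge of $G[X\cup N(X)\setminus\{b\}]$ removable, is unsupported. It also carries the whole lemma: the second edge of $C$ at $a$ goes from $a$ into $N(X)\setminus\{b\}$, so that claim alone would finish the proof. Minimality plus uncrossing only controls a witness $Y$ of another cycle edge when $X\cap Y\neq\emptyset$ and $X\cup Y\neq A$, and that case is actually easy. Suppose $X,Y$ witness the two edges $ab,ab'$ of $C$ at an $A$-vertex $a$, with $X\cup Y\neq A$.
\begin{itemize}
\item Submodularity makes $Z=X\cap Y$ satisfy $|N(Z)|=|Z|+1$.
\item $a$ is the unique neighbour of both $b$ and $b'$ in $Z$, so $N(Z\setminus\{a\})\subseteq N(Z)\setminus\{b,b'\}$.
\item Hall's surplus condition then forces $Z=\{a\}$, hence $d(a)=2$, a contradiction.
\end{itemize}
So the entire difficulty is the co-covering case $X\cup Y=A$. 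There the submodular inequality gives nothing, and your degree-counting sketch (``leaf-like structure'') does not address it. Two smaller errors:
\begin{itemize}
\item In Step~1, the edges that become forbidden in $G-e$ go from $N(X)\setminus\{b\}$ to $A\setminus X$, not from $N(X)\setminus N_{G-e}(X)=\{b\}$.
\item $X\cup N(X)\setminus\{b\}$ is balanced, so it is not a shore of a tight cut. The relevant tight cut is $\partial(X\cup N(X))$.
\end{itemize}

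\emph{The fallback induction along tight cuts.} It breaks at ``check that nonremovable edges correspond''. Let $\partial(W)$ be a tight cut with contractions $G_1,G_2$.
\begin{itemize}
\item An edge off the cut is removable in $G$ iff it is removable in the contraction containing it.
\item A cut edge is removable in $G$ iff it is removable in \emph{both} contractions, so a nonremovable cut edge need only be nonremovable on one side.
\end{itemize}
Now suppose $C$ crosses $\partial(W)$ through $c_1,c_2$, where $c_1$ is nonremovable only in $G_1$ and $c_2$ only in $G_2$. Each projected cycle then contains an edge that is removable in its own contraction, and the forest property of $G_1,G_2$ yields no contradiction.

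To make the induction close you must strengthen the statement, for example to:
\begin{quote}
for every edge $uv$, the vertices $u$ and $v$ are not joined by a path of nonremovable edges in $G-uv$.
\end{quote}
This is equivalent to the lemma: apply the lemma to two copies of $G-uv$ joined by $u^{(1)}v^{(2)}$ and $u^{(2)}v^{(1)}$. It is also inherited under contraction. Each cut edge of the cycle lies in exactly one projected cycle on each side, and is nonremovable on at least one side. Counting over the crossing pattern then always produces a projected cycle with at most one edge removable in its contraction.

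Two further points still need handling:
\begin{itemize}
\item $\delta\ge 3$ must survive in the contractions. This holds for $\partial(X\cup N(X))$, which has at least three edges: $b$ sends at least two edges out, and if these were the whole cut then $e$ would be forbidden. It fails for an arbitrary tight cut, which can have only two edges even when $\delta(G)\ge 3$.
\item The $4$-vertex braces (a $C_4$ with multiple edges) are omitted from your base case and must be treated separately.
\end{itemize}
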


\begin{lem}\label{3-PMs}
Let $G$ be a matching covered graph. If $\delta(G) \geq 3$ and $\Phi(G) = 3$, then $G \cong K_4$ or $G$ is $K_2$ with exactly three multiple edges, where $K_n$ is a complete graph on $n$ vertices. 
\end{lem}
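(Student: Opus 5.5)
The plan is to combine Lemma \ref{formula} with a counting of edges. Since $G$ is matching covered, every edge is allowed, so $F(G)=G$. Lemma \ref{formula} then gives, when $|V(G)|\ge 4$,
$$3=\Phi(G)\geq \frac{|E(G)|-|V(G)|}{2}+2 .$$
Hence $|E(G)|\le |V(G)|+2$. On the other hand, $\delta(G)\ge 3$ gives $2|E(G)|\ge 3|V(G)|$. Combining the two inequalities, $3|V(G)|\le 2|V(G)|+4$, so $|V(G)|\le 4$.

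A matching covered graph has an even number of vertices, so $|V(G)|\in\{2,4\}$.

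\textbf{Case $|V(G)|=2$.} Here $G$ is $K_2$ with some number $m$ of parallel edges, and each of these edges is a perfect matching. So $\Phi(G)=m$, and $\Phi(G)=3$ forces exactly three parallel edges. This is consistent with $\delta(G)\ge 3$.

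\textbf{Case $|V(G)|=4$.} The bounds now read $6\le |E(G)|\le 6$, so $|E(G)|=6$ and $G$ is $3$-regular (with multiple edges possibly allowed) on four vertices. If $G$ is simple, then $G\cong K_4$, which indeed has three perfect matchings.

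The main care is to exclude multigraphs here. Suppose $G$ has a multiple edge, say $uv$ with multiplicity $k\ge 2$.
\begin{itemize}
\item If $k=3$, then $u$ and $v$ have no other neighbours, so $G$ is disconnected, contradicting that a matching covered graph is connected.
\item If $k=2$, then $u$ and $v$ each have exactly one further edge. If both go to the same vertex $w$, the remaining vertex $x$ has degree $3$ with all its edges going to $w$ only; then $w$ has degree at least $5$, contradicting $3$-regularity. So these edges are $uw$ and $vx$, with $\{w,x\}=V(G)\setminus\{u,v\}$. For $w$ and $x$ to reach degree $3$, the edge $wx$ must have multiplicity $2$. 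Then $G$ is a $4$-cycle $uvxw$ with the edges $uv$ and $wx$ doubled. Its perfect matchings are $\{e,f\}$ with $e$ one of the two $uv$ edges and $f$ one of the two $wx$ edges, giving $4$ of them, together with $\{uw,vx\}$. So $\Phi(G)=5\ne 3$, a contradiction.
\end{itemize}
Thus in this case $G\cong K_4$.

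The only subtle step is this multigraph check on four vertices. The rest is the direct edge count obtained from Lemma \ref{formula}.
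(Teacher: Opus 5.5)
Your proof is correct, but it takes a different route from the paper.

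The paper imports an external structural result, Corollary~7 of \cite{CLM14}. This gives a nonempty set $R$ of at most two edges such that $G-R$ is still matching covered. The paper then splits on the maximum degree of $G-R$:
\begin{itemize}
\item If it is at most $2$, the count $3|V(G)|\le\sum_v d_G(v)\le 2|V(G)|+2|R|\le 2|V(G)|+4$ forces $|V(G)|\le 4$.
\item Otherwise $\Phi(G-R)\ge 3$, and some perfect matching of $G$ uses an edge of $R$, so $\Phi(G)>3$.
\end{itemize}

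You instead reuse Lemma~\ref{formula}, which is already in the paper. From $|E(G)|\le|V(G)|+2$ and $2|E(G)|\ge 3|V(G)|$ you get $|V(G)|\le 4$ in one line. This is the same computation the paper later performs in the bicritical case of Theorem~\ref{main}.

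What your route gains is self-containment: the lemma becomes a direct corollary of the EPL bound, with no appeal to removable classes. What it costs is that all the depth now sits inside Lemma~\ref{formula}, which you apply to a possibly non-simple $G$. That use is legitimate. The paper's graphs may have parallel edges, and the bound is stable under adding parallel copies of allowed edges: each copy raises $\Phi$ by at least $1$ but raises the right-hand side by only $\tfrac12$.

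Your explicit four-vertex multigraph check is more careful than the paper's proof. You rule out the triple edge by connectivity and the doubled $4$-cycle because it has $5$ perfect matchings. The paper goes from $|V(G)|\le 4$ and $\Phi(G)=3$ straight to the conclusion without saying why the doubled $4$-cycle is excluded.
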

\begin{proof}
Since $\delta(G)\geq 3$ and $G$ is matching covered, $G$ is not induced by a cycle. By Corollary 7 in \cite{CLM14}, there exists a nonempty edge subset $R$ with $|R|\leq 2$ such that $G-R$ remains matching covered. If the maximum degree of $G-R$ is less than 3, then $|V(G)|\leq 4$. Since $\Phi(G)=3$, $G$ is either $K_2$ with exactly three edges or $G\cong K_4$. If the maximum degree of $G-R$ is 3, then $\Phi(G-R)\geq 3$. Since $G$ is matching covered, $G$ has a perfect matching containing an edge of $R$, which is distinct from any perfect matching of $G-R$. So $\Phi(G)>\Phi(G-R)\geq 3$, a contradiction. Therefore, the lemma holds.  
\end{proof}

A component of a graph is called \emph{odd}  if it has an odd number of vertices. Denote by $o(G)$ the number of odd components of a graph $G$. 
The \emph{deficiency} of a graph $G$, denoted by $\operatorname{def}(G)$, is defined as $\operatorname{def}(G) =\max\big\{o(G-S)-|S|\,\big|\, S\subseteq V(G)\big\}$. If $G$ is matchable, then $\operatorname{def}(G)=0$. 
A \emph{barrier} of a graph $G$ is a set $S\subseteq V(G)$ such that $o(G-S)=|S|+\operatorname{def}(G)$. Moreover, a barrier $S$ of $G$ is \emph{nontrivial} if $|S|\geq2$. 
For a barrier $S$ of a non-bipartite graph $G$, denote by $G(S)$ the bipartite graph obtained from $G$ by deleting edges with both ends in $S$, 
and then shrinking each odd component of $G-S$ to a single vertex. If $G$ is a bipartite  graph with the bipartition $(S,T)$, then we write $G=G(S)=G(T)$.

A graph $G$ is {\em factor-critical} if $G-u$ is matchable for any $u\in V(G)$. 

\begin{lem}[\cite{LP09}]\label{p-matchable}
Let $G$ be a matchable graph. Then the following two statements hold. 

{\rm (i)} $G$ is bicritical if and only if $G$ has no nontrivial barriers,

{\rm (ii)} If $S$ is a maximal barrier of $G$, then each component of $G-S$ is factor-critical.  
\end{lem}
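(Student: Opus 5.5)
Both parts follow from Tutte's theorem and a parity count. Throughout, $G$ is matchable, so $\operatorname{def}(G)=0$, every barrier $S$ satisfies $o(G-S)=|S|$, and $o(G-X)\le |X|$ for every $X\subseteq V(G)$. Parity will be used repeatedly: since $|V(G)|$ is even, $o(G-X)\equiv |X| \pmod 2$ for every $X$. The same holds for any even graph in place of $G$.

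For (i), suppose first that $G$ is bicritical and $S$ is a barrier with $|S|\ge 2$. Pick distinct $u,v\in S$ and set $S'=S\setminus\{u,v\}$. Then $(G-u-v)-S'=G-S$, so
\[o\bigl((G-u-v)-S'\bigr)=|S|=|S'|+2>|S'|.\]
By Tutte's theorem $G-u-v$ is not matchable, which is a contradiction.

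Conversely, suppose $G$ (on at least $4$ vertices) is not bicritical, and pick $u,v$ with $G-u-v$ not matchable. Tutte's theorem gives $T\subseteq V(G)\setminus\{u,v\}$ with $o(G-u-v-T)>|T|$. Since $G-u-v$ has an even number of vertices, parity upgrades this to $o(G-u-v-T)\ge |T|+2$. Put $S=T\cup\{u,v\}$. Then $o(G-S)\ge |S|$, and matchability gives $o(G-S)\le|S|$. Hence $S$ is a barrier with $|S|\ge 2$, that is, a nontrivial barrier.

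For (ii), let $S$ be a maximal barrier. First we show that $G-S$ has no even component. Suppose $C$ is an even component and $x\in V(C)$. Then $C-x$ has an odd number of vertices, so it has at least one odd component. Hence
\[o\bigl(G-(S\cup\{x\})\bigr)\ge |S|+1=|S\cup\{x\}|.\]
Together with the upper bound from matchability, $S\cup\{x\}$ is a barrier, contradicting the maximality of $S$.

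So every component $C$ of $G-S$ is odd. Suppose some $C$ is not factor-critical, and pick $x\in V(C)$ such that $C-x$ is not matchable. Since $C-x$ is even, Tutte's theorem and parity give $T\subseteq V(C)\setminus\{x\}$ with $o(C-x-T)\ge |T|+2$. Let $S'=S\cup\{x\}\cup T$. The components of $G-S'$ include the $|S|-1$ odd components of $G-S$ other than $C$, together with the odd components of $C-x-T$. Therefore
\[o(G-S')\ge |S|-1+|T|+2=|S'|.\]
So $S'$ is a barrier properly containing $S$, contradicting maximality. Hence every component of $G-S$ is factor-critical.

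The only delicate point is the parity bookkeeping that turns a strict Tutte inequality into a gain of $2$. This gain is exactly what is needed to match the upper bound $o(G-X)\le |X|$ coming from matchability. Everything else is routine.
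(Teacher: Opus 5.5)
Your proof is correct. The paper gives no argument of its own for this lemma; it simply cites Lov\'asz--Plummer. Your reasoning is essentially the standard argument behind that result: upgrade the strict Tutte inequality $o>|T|$ on an even graph to $o\ge |T|+2$, then match it against the bound $o(G-X)\le |X|$ from matchability. This includes the preliminary step that rules out even components of $G-S$ before proving factor-criticality.

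One point deserves to be stated explicitly rather than parenthetically. The paper defines bicritical graphs to have at least $4$ vertices, so the ``if'' direction of (i) is false as written when $|V(G)|=2$: $K_2$ has no nontrivial barrier but is not bicritical. The hypothesis $|V(G)|\ge 4$ you added in the converse is therefore genuinely needed. It is harmless in the paper's application, where the graph is large.
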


Recall the core notion in the Gallai-Edmonds structure theorem (see \cite{LP09} for details): $D(G)$ collects all vertices in a graph $G$ which are not incident with any edge of at least one maximum matching of $G$, and $A(G)$ consists of vertices in $V(G)\setminus D(G)$ adjacent to at least one vertex in $D(G)$. 

\begin{lem}[\cite{LP09}]\label{A(G)-barrier}
Let $G$ be any graph. Then $A(G)$ is a barrier of $G$. 
\end{lem}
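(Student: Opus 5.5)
The plan is to read the statement off the Gallai--Edmonds structure theorem together with the Tutte--Berge formula. Write $C(G)=V(G)\setminus(D(G)\cup A(G))$. I would use the following facts from the structure theorem (see \cite{LP09}):
\begin{enumerate}
\item[(a)] every component of $G[D(G)]$ is factor-critical, and in particular has an odd number of vertices;
\item[(b)] $G[C(G)]$ has a perfect matching;
\item[(c)] every maximum matching $M$ of $G$ matches each vertex of $A(G)$ to a vertex of $D(G)$, and distinct vertices of $A(G)$ go to distinct components of $G[D(G)]$;
\item[(d)] $M$ restricts to a near-perfect matching of each component of $G[D(G)]$ and to a perfect matching of $G[C(G)]$.
\end{enumerate}
I would also use the Tutte--Berge formula: the number of vertices left uncovered by a maximum matching equals $\operatorname{def}(G)$, with the deficiency defined as in the paper.

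The first step is to identify the components of $G-A(G)$. By definition of $A(G)$, no vertex of $C(G)$ is adjacent to a vertex of $D(G)$. Hence every component of $G-A(G)$ is either a component of $G[D(G)]$ or a component of $G[C(G)]$. Components of the first kind are odd by (a). Components of the second kind are even: $G[C(G)]$ has a perfect matching by (b), and since its components are exactly these components, each of them has a perfect matching. Therefore $o(G-A(G))=k$, where $k$ is the number of components of $G[D(G)]$.

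The second step is to count uncovered vertices. Fix a maximum matching $M$. By (c) and (d), exactly $|A(G)|$ components of $G[D(G)]$ receive an $M$-edge from $A(G)$, and each of these is then perfectly matched. Each of the remaining $k-|A(G)|$ components has exactly one vertex uncovered by $M$, and no vertex of $A(G)\cup C(G)$ is uncovered. So $M$ misses exactly $k-|A(G)|$ vertices. By Tutte--Berge this number equals $\operatorname{def}(G)$, and hence
\[
o(G-A(G))=k=|A(G)|+\operatorname{def}(G),
\]
which says precisely that $A(G)$ is a barrier.

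The only substantive obstacle is the structure theorem itself, namely (a) and (c); if it may be assumed as in \cite{LP09}, the argument above is complete. Otherwise I would prove it by induction via the stability lemma. Deleting a vertex $a\in A(G)$ leaves $D$ unchanged and reduces $A$ to $A(G)\setminus\{a\}$, which gives (c) by a Hall-type argument on the bipartite graph between $A(G)$ and the components of $D(G)$. Factor-criticality of the $D$-components, which is (a), follows from Gallai's lemma: a connected graph in which every vertex is missed by some maximum matching is factor-critical.
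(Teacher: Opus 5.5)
Your proof is correct. The paper gives no proof of its own and simply cites \cite{LP09}, where the lemma is an immediate consequence of the Gallai--Edmonds structure theorem, obtained essentially as you do: you show $o(G-A(G))=c(D(G))$, use (c) and (d) to get $|V(G)|-2\nu(G)=c(D(G))-|A(G)|$, and then apply Tutte--Berge, so your fallback sketch of the structure theorem itself is not needed.
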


A graph is \emph{elementary} if its allowed  edges formed a connected (spanning) subgraph. Let $S_1,\ldots,S_k$ be all the maximal barriers of a graph $G$. We define $\mathcal{P}(G)=\{S_1,\ldots,S_k\}$. Then $\mathcal{P}(G)$ is a partition of $V(G)$ if $G$ is elementary (see \cite[Lemma 5.2.1]{LP09}).  
Next we introduce several helpful results of elementary graphs. 

\begin{lem}[\cite{LP09}]\label{saturated-ele}
Let $G$ be an elementary graph, $S\in \mathcal{P}(G)$, and $|S|\geq 2$. Then $G(S)$ is an elementary graph. 
\end{lem}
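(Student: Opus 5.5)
The statement immediately above is Lemma \ref{saturated-ele}: if $G$ is elementary, $S\in\mathcal{P}(G)$ and $|S|\geq 2$, then $G(S)$ is elementary. My plan is to show two things. First, every allowed edge of $G$ that survives in $G(S)$ is still allowed there. Second, the image of $F(G)$ in $G(S)$ is connected and spanning.

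\textbf{Structure of $G-S$.} Since $G$ is elementary, it is matchable, so $\operatorname{def}(G)=0$ and $o(G-S)=|S|$. Because $S$ is a maximal barrier, Lemma \ref{p-matchable}(ii) says every component of $G-S$ is factor-critical, hence odd. So $G-S$ has exactly $|S|$ components $C_1,\ldots,C_{|S|}$, all odd. A parity count then shows that every perfect matching $M$ of $G$ has three properties:
\begin{itemize}
\item $M$ has no edge inside $S$;
\item each $C_i$ sends exactly one edge of $M$ to $S$;
\item $M$ restricted to $C_i$ is a near-perfect matching of $C_i$.
\end{itemize}
Consequently $M$ projects onto a perfect matching of $G(S)$. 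In particular $G(S)$, which is bipartite with colour classes $S$ and $\{c_1,\ldots,c_{|S|}\}$, is matchable.

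\textbf{Allowed edges are preserved.} Let $e=uv$ be an allowed edge of $G$ with $u\in S$ and $v\in C_i$. Take a perfect matching $M$ of $G$ containing $e$. Its projection is a perfect matching of $G(S)$ that contains the image $uc_i$. So every edge of $G(S)$ coming from an allowed $S$–$(G-S)$ edge is allowed in $G(S)$. Conversely, factor-criticality of the $C_i$ lets any perfect matching of $G(S)$ be lifted to one of $G$; this is not needed for elementarity, but it confirms the correspondence.

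\textbf{Connectivity of the image.} Because $F(G)$ is connected and spanning, contracting each $C_i$ in $F(G)$ gives a connected spanning subgraph $H$ of the graph obtained from $G$ by contracting the $C_i$. By the first bullet, no allowed edge of $G$ has both ends in $S$, so $H$ has no edges inside $S$. Edges of $F(G)$ with both ends in some $C_i$ become loops and can be discarded. Every remaining edge of $H$ joins $S$ to some $c_i$ and, by the previous step, is allowed in $G(S)$. Hence $H$, minus its loops, is a connected spanning subgraph of $G(S)$ consisting of allowed edges. Therefore $F(G(S))$ is connected and spanning, and $G(S)$ is elementary. The hypothesis $|S|\geq 2$ ensures $G(S)$ has at least four vertices and is not the trivial $K_2$ case.

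\textbf{Main obstacle.} The only delicate point is the first step: that all components of $G-S$ are odd, and hence that perfect matchings avoid edges inside $S$ and meet each component exactly once. This is exactly where maximality of the barrier enters, through Lemma \ref{p-matchable}(ii). For a non-maximal barrier, even components could appear and the projection of a perfect matching would fail. Everything else is routine bookkeeping under contraction.
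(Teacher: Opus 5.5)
Your proof is correct and is essentially the standard argument for this fact, which the paper does not prove itself but cites from Lov\'asz--Plummer. Maximality of $S$ makes all $|S|$ components of $G-S$ odd, so every perfect matching avoids edges inside $S$ and projects to a perfect matching of $G(S)$; the image of the connected spanning subgraph $F(G)$ is then a connected spanning subgraph of $G(S)$ consisting of allowed edges. One minor remark: the hypothesis $|S|\geq 2$ is never used (for $|S|=1$, $G(S)$ is a multi-edge $K_2$, which is also elementary), so your closing comment about it is harmless but unnecessary.
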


\begin{lem}[\cite{LP09}]\label{subdivide-barrier}
Let $G$ be an elementary graph. Then the following statements hold. 

{\rm (i)} For an allowed edge $e=xy$ in $G$, if $G'$ is obtained by subdividing $e$ by the insertion of two new vertices $u$ and $v$, then the classes of $\mathcal{P}(G')$ are the same as those of $G$ except $u$ joins the class of $y$ and $v$ joins the class of $x$,

{\rm (ii)} If $e\notin E(G)$, then $e$ is allowed in $G+e$ if and only if $x$ and $y$ lie in different classes of $\mathcal{P}(G)$. 

\end{lem}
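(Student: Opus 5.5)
Both parts rest on one characterization of the classes of $\mathcal{P}(G)$ for an elementary graph $G$: for distinct vertices $a,b$,
\[
a \text{ and } b \text{ lie in the same class of } \mathcal{P}(G) \iff G-a-b \text{ is not matchable}.
\]
I would prove this first, then read off (ii) directly and obtain (i) by comparing the relation in $G$ and $G'$.

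\textbf{The characterization.} Suppose $a,b\in S\in\mathcal{P}(G)$. Since $G$ is matchable, $o(G-S)=|S|$. Removing $S\setminus\{a,b\}$ from $G-a-b$ leaves $|S|$ odd components but deletes only $|S|-2$ vertices, so Tutte's condition fails and $G-a-b$ is not matchable. Conversely, suppose $G-a-b$ is not matchable. Tutte's theorem gives $T$ with $o(G-a-b-T)>|T|$. Since $|V(G-a-b)|$ is even, parity forces $o(G-a-b-T)\ge |T|+2$. Put $S=T\cup\{a,b\}$. Then $o(G-S)\ge |S|$, and $\operatorname{def}(G)=0$ gives equality, so $S$ is a barrier containing both $a$ and $b$. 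It lies in some maximal barrier. Because $G$ is elementary, the maximal barriers partition $V(G)$ by \cite[Lemma 5.2.1]{LP09}, so that maximal barrier is the class of $a$ and also of $b$.

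\textbf{Part (ii).} The edge $e=xy$ is allowed in $G+e$ exactly when $G-x-y$ is matchable, since $G$ is matchable. By the characterization this happens iff $x$ and $y$ lie in different classes.

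\textbf{Part (i).} Let the subdivided path be $x,u,v,y$. I first check that $G'$ is elementary, so the characterization applies to $G'$:
\begin{itemize}
\item a perfect matching of $G$ avoiding $e$, extended by $uv$, is a perfect matching of $G'$, and if it used $e$, replace $e$ by $xu,vy$;
\item hence every edge of $G'$ that was allowed in $G$ stays allowed;
\item $xu$ and $vy$ are allowed because $e$ is allowed in $G$, and $uv$ is allowed because $G$ is matchable;
\item the allowed edges therefore still form a connected spanning subgraph.
\end{itemize}

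Now compare the relation ``$G'-a-b$ not matchable'' with the one in $G$.
\begin{itemize}
\item For $a,b\in V(G)$, perfect matchings of $G-a-b$ and of $G'-a-b$ correspond: use $uv$ on the $G$-side if $xy$ is unused, and swap $xy\leftrightarrow\{xu,vy\}$ otherwise. So old vertices keep their mutual relation.
\item $G'-u-v=G$ is matchable, so $u$ and $v$ are in different classes.
\item In $G'-u-y$ the vertex $v$ is isolated, so $u$ lies in the class of $y$.
\item For $b\in V(G)\setminus\{y\}$, in $G'-u-b$ the vertex $v$ must be matched to $y$. Thus $G'-u-b$ is matchable iff $G-y-b$ is, so $u\sim b$ iff $y\sim b$.
\item Symmetrically, $v$ lies in the class of $x$.
\end{itemize}

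I expect the main obstacle to be the parity step and the appeal to the partition property in the characterization. Everything else is routine matching exchanges, once one verifies that elementarity passes to $G'$ (which needs $e$ to be allowed).
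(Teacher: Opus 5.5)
The paper gives no proof of this lemma; it only cites Lov\'asz--Plummer. Your route via the relation ``$G-a-b$ is not matchable'' is the standard one. Several parts of your argument are correct: the characterization of the classes, part (ii), and the exchange arguments comparing $G'-a-b$ with $G-a-b$ and $G'-u-b$ with $G-y-b$.

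\textbf{The gap is in (i).} Deleting $u$ and $v$ from $G'$ does not give back $G$, because $e$ has been removed, so $G'-u-v=G-e$. Two of your claims, ``$uv$ is allowed because $G$ is matchable'' and ``$u$ and $v$ lie in different classes,'' therefore both need a perfect matching of $G$ that avoids $e$. Your elementarity check for $G'$ rests on this too. Without $uv$, the allowed edges of $G'$ are those of $G$ with $e$ replaced by the two pendant edges $xu$ and $vy$, and connectivity can fail. This is not a cosmetic slip. Take $G=K_2$ with the single edge $e=xy$: it is elementary and $e$ is allowed, so the hypotheses of (i) hold. But $G'$ is the path $xuvy$, which is not elementary. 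Its maximal barriers are $\{u,v\}$, $\{u,y\}$ and $\{x,v\}$, so $\mathcal{P}(G')$ is not even a partition, and (i) fails as literally stated.

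\textbf{The missing step.} You need to show, from elementarity and $G\not\cong K_2$, that $G-e$ is matchable. Suppose every perfect matching of $G$ contained $e$. Then $e$ would have no parallel edge, and every other edge at $x$ or $y$ would be forbidden. So $\{x,y\}$ would span a component of the allowed-edge subgraph, and connectivity of that subgraph would force $G\cong K_2$.

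Once $G-e$ is matchable, the rest of your argument goes through:
\begin{itemize}
\item $uv$ is allowed, so the path $xuvy$ takes the place of $e$ in the allowed-edge subgraph and $G'$ is elementary.
\item $u\not\sim v$ then also follows by transitivity, since $u\sim y$, $v\sim x$, and $x\not\sim y$ because $e$ is allowed.
\end{itemize}
The paper's use of (i) subdivides, two vertices at a time, a copy of $K_4$ or of $K_2$ with three parallel edges, so it never meets this degenerate case.
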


\begin{thm}[\cite{Hetyei}]\label{p-ele}
A bipartite graph is elementary if and only if it is matching covered. 
\end{thm}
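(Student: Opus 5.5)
The plan is to prove the two directions separately. The direction ``matching covered $\Rightarrow$ elementary'' is immediate. The converse reduces, via Hall's theorem, to exhibiting a tight set whose existence cuts the subgraph of allowed edges into two pieces.

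\emph{Matching covered implies elementary.} If $G$ is matching covered, then every edge is allowed. Hence $F(G)=G$, which is connected and spanning, so $G$ is elementary.

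\emph{Elementary implies matching covered.} Let $G$ be bipartite with bipartition $(A,B)$ and elementary. Connectivity is free, since the allowed edges already form a connected spanning subgraph. It remains to show that every edge is allowed.

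Suppose instead that $xy$ is a forbidden edge with $x\in A$ and $y\in B$. Then $G-x-y$ has no perfect matching, while $|A\setminus\{x\}|=|B\setminus\{y\}|$ because $G$ is matchable. By Hall's theorem applied to $G-x-y$, there is a set $X\subseteq A\setminus\{x\}$ with $|N_G(X)\setminus\{y\}|<|X|$. Since $G$ itself satisfies Hall's condition, $|N_G(X)|\ge |X|$. Together these force $|N_G(X)|=|X|$ and $y\in N_G(X)$.

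Now fix any perfect matching $M$ of $G$. Each vertex of $X$ is matched into $N_G(X)$, and $|N_G(X)|=|X|$, so $M$ matches $X$ onto $N_G(X)$ exactly. Two consequences follow:
\begin{itemize}
\item no edge joining $N_G(X)$ to $A\setminus X$ lies in any perfect matching;
\item by definition of $N_G(X)$, there are no edges at all from $X$ to $B\setminus N_G(X)$.
\end{itemize}
Therefore no allowed edge joins $X\cup N_G(X)$ to its complement. Both sides are nonempty: $X\cup N_G(X)$ contains $y$, and the complement contains $x$, since $x\notin X$. So $F(G)$ is disconnected, contradicting elementarity. Hence every edge is allowed and $G$ is matching covered.

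The only real step is the choice of the Hall-deficient set $X$ and the observation that it is \emph{tight}, i.e.\ $|N_G(X)|=|X|$. Tightness is what turns ``$xy$ is forbidden'' into a partition of $V(G)$ with no allowed edge across it. Everything else is routine.
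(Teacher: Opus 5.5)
Your proof is correct and complete. The paper does not prove this statement (it is quoted from Hetyei), so there is no in-paper argument to compare against. Your argument is the standard one, essentially as in Lov\'asz--Plummer. A forbidden edge $xy$ forces, by Hall's theorem in $G-x-y$, a tight set $X\subseteq A\setminus\{x\}$ with $|N_G(X)|=|X|$ and $y\in N_G(X)$. Then no allowed edge leaves $X\cup N_G(X)$, which separates $y$ from $x$ in $F(G)$.

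The one point you use silently is the convention that an elementary graph has a perfect matching. Under the paper's looser wording, the only exception is the one-vertex graph: a graph on at least two vertices with no perfect matching has an edgeless, hence disconnected, spanning subgraph of allowed edges. That exception is exactly what the ``at least two vertices'' clause in the definition of matching covered rules out. In the forward direction matchability is automatic, since a connected graph on at least two vertices has an edge, and that edge is allowed.
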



A simple matchable graph $G$ is said to be \emph{saturated} if $\Phi(G+e)>\Phi(G)$ for all edges not in $G$. For saturated non-elementary graphs, we have the following famous Cathedral  theorem. 

\noindent \textbf{The Cathedral Construction.} Let $G_0$ be any saturated elementary graph. To each class $S \in \mathcal{P}(G_0)$ assign an (already constructed) saturated graph $G_S$ or the empty set. For each $S \in \mathcal{P}(G_0)$, join every vertex of $S$ to every vertex of $G_S$. When $G_S$ is not empty,  we call $G_S$ the \emph{tower} over $S$. 
(Note that $G_S$ and $S$ are vertex-disjoint.)

\begin{thm}[The Cathedral Theorem, \cite{LP09}]\label{Cathedral}
{\rm (a)} Every graph $G$, built up by iterating the Cathedral Construction using smaller saturated graphs, is itself saturated.

{\rm (b)} The allowed edges of $G$ are precisely those edges which are allowed in one of the elementary graphs used in one of the steps.

{\rm (c)} Conversely, if $G$ is any saturated graph, it can be built up using the Cathedral Construction starting with a saturated elementary graph $G_0$ and a collection of $|\mathcal{P}(G_0)|$ smaller saturated graphs (some perhaps empty) already constructed. 
\end{thm}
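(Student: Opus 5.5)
The plan is to prove the three parts in order, with (a) and (b) handled together by induction on the number of Cathedral Constructions.

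\textbf{Parts (a) and (b).} The core step is the following claim.

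\emph{Claim.} If $G$ is obtained from $G_0$ and towers $G_S$ ($S\in\mathcal{P}(G_0)$), then no edge joining a class $S$ to its tower $G_S$ lies in a perfect matching of $G$.

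To prove it, suppose a perfect matching $M$ of $G$ matches $k\ge 1$ vertices of $G_S$ into $S$.
\begin{itemize}
\item The vertices of $G_S$ have neighbours only in $G_S\cup S$, and $|V(G_S)|$ is even. Hence $k$ is even, so $k\geq 2$.
\item Since $S$ is a barrier of the matchable graph $G_0$, the graph $G_0-S$ has exactly $|S|$ odd components.
\item In $G-S-V(G_S)$, each such component $C$ may be enlarged by the towers $G_T$ over the classes $T\neq S$ that meet $C$. These towers have even order, so the enlarged components are still odd. Their neighbours outside themselves lie only in $S$.
\item Each of these $|S|$ odd components needs an $M$-edge to $S$. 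Only $|S|-k<|S|$ vertices of $S$ remain for this, a contradiction.
\end{itemize}

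Consequently every perfect matching of $G$ is the disjoint union of a perfect matching of $G_0$ and perfect matchings of the towers. Part (b) follows by induction, since the towers are themselves built by the construction.

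\textbf{Saturation in (a).} Let $e=xy$ be a non-edge of $G$; we show $\Phi(G+e)>\Phi(G)$, splitting by where $x$ and $y$ lie.
\begin{itemize}
\item If $x,y\in V(G_0)$, they lie in different classes, since each class together with its tower is joined completely and $G_0$ is saturated. By Lemma \ref{subdivide-barrier}(ii), $G_0-x-y$ is matchable, and extending by perfect matchings of the towers gives a new perfect matching of $G+e$ through $e$.
\item If $x,y$ lie in the same tower, use the saturation of that tower.
\item If $x\in S$ and $y\in V(G_T)$ with $T\neq S$, take a perfect matching of $G_T$ whose edge at $y$ is $yy'$. Replace $yy'$ by $xy$ and $y't$ for some $t\in T$. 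Then $x$ and $t$ lie in different classes of $\mathcal{P}(G_0)$, so $G_0-x-t$ is matchable by Lemma \ref{subdivide-barrier}(ii). Together with perfect matchings of the remaining towers this gives a perfect matching through $e$.
\item If $x\in V(G_S)$ and $y\in V(G_T)$ with $S\ne T$, do the same rerouting on both sides: $x$ goes to some $s\in S$ and $y$ to some $t\in T$, with $G_0-s-t$ matchable.
\end{itemize}
In every case a new perfect matching uses $e$, so $G$ is saturated.

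\textbf{Part (c).} This is the main obstacle. If $G$ is saturated and elementary, take $G_0=G$ with all towers empty. Otherwise the allowed subgraph $F(G)$ has several components, each elementary.

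The key tool is saturation: for every non-edge $xy$, the edge $xy$ would be allowed in $G+xy$, so $G-x-y$ is matchable. Using this, the plan is as follows.
\begin{enumerate}
\item Show that between two components of $F(G)$, and between a class $S$ of one component and another component, adjacency is ``all or nothing'': either all possible edges are present or none. The argument is that a missing edge $xy$ would yield a perfect matching of $G-x-y$, and combining it with perfect matchings of $G$ would produce an alternating cycle that makes some forbidden edge allowed.
\item Choose $G_0$ to be a component of $F(G)$ that is minimal in the induced ``lies above'' order. Concretely, I would first try taking $G_0$ as the component containing a vertex of $D(G')$, where $G'$ is $G$ minus some component, so that Lemma \ref{A(G)-barrier} and Lemma \ref{p-matchable}(ii) supply the barrier structure.
\item Show that $G[V(G_0)]$ is saturated elementary.
\item Show that the remaining vertices split into sets $V_S$, one for each $S\in\mathcal{P}(G_0)$, with each $V_S$ completely joined to $S$ and to nothing else in $G_0$. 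Each $G[V_S]$ inherits saturation, because adding an edge inside $V_S$ that creates a perfect matching of $G[V_S]$ creates one of $G$.
\end{enumerate}
Proving the ``all or nothing'' adjacency pattern via the alternating-cycle argument is where I expect most of the work to lie.
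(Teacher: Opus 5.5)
\paragraph{Parts (a) and (b).} The paper gives no proof of this theorem; it is quoted from \cite{LP09}. Your parts (a) and (b) are essentially right: the counting argument showing no edge between $S$ and $G_S$ lies in a perfect matching, and the four-case rerouting for saturation, are the standard ones. Two points need a line each.
\begin{itemize}
\item When you enlarge the odd components of $G_0-S$ by towers, you need every class $T\neq S$ to lie in a single component of $G_0-S$. Otherwise $G_T$ merges several components and the count of odd components drops; in $C_6$, for instance, one colour class meets all three components left after deleting the other. This holds because $G_0$ is saturated, so every class of $\mathcal{P}(G_0)$ is a clique by Lemma \ref{subdivide-barrier}(ii). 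That is also the correct reason in your first saturation case.
\item In the rerouting cases the pair $x,t$ (or $s,t$) may already be adjacent in $G_0$. What you need is that $G_0-x-t$ is matchable whenever $x,t$ lie in different classes. This is the defining property of the classes in \cite{LP09}, not Lemma \ref{subdivide-barrier}(ii) as stated.
\end{itemize}

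\paragraph{The gap in (c).} Part (c) is a plan rather than a proof, and its first step is false as stated. Let $G$ be the triangle $upq$ with a pendant edge $uw$. It is saturated: it is the cathedral with $G_0=uw$ and tower $pq$ over $\{u\}$. Its factor components $\{u,w\}$ and $\{p,q\}$ are joined only by $up,uq$. The class $\{p\}$ of the upper component is adjacent only to $u$ in the lower one, so adjacency is neither all nor nothing. It is all-or-nothing only from a class of the lower component to the upper one, and stating that already presupposes the order you have not built.

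The proposed mechanism also cannot give a contradiction. A perfect matching $N$ of $G-x-y$ combined with a perfect matching $M$ of $G$ yields an $M$-alternating cycle through the non-edge $xy$. That only shows edges are allowed in $G+xy$, which is exactly what saturation says; it never makes a forbidden edge of $G$ allowed. In the example, $x=w$, $y=p$ give the cycle $wuqpw$.

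Steps 2--4 are asserted without argument. Step 3 in particular is not automatic, since an $M$-alternating path between non-adjacent vertices of a factor component may leave that component.

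\paragraph{A missing tool.} A workable tool is the Gallai--Edmonds decomposition of $G-x$ for a single vertex $x$:
\begin{itemize}
\item Saturation puts every non-neighbour of $x$ into $D(G-x)$.
\item By Lemma \ref{A(G)-barrier}, and counting as in your Claim, $X=A(G-x)\cup\{x\}$ is a barrier of $G$ that spans a clique and is completely joined to $C(G-x)$.
\item Both $G[C(G-x)]$ and $G-C(G-x)$ are saturated. For the latter, re-pair through the clique $X$ any vertices of $X$ that are matched into $C(G-x)$.
\end{itemize}
This peels off a tower and sets up induction on $|V(G)|$. You would still have to locate $X$ inside the cathedral structure of $G-C(G-x)$ given by induction, so that $G[C(G-x)]$ can be inserted as the tower over the class of $x$. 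You would also have to prove that $G$ is elementary when $C(G-x)=\emptyset$ for every vertex $x$.
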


\begin{lem}\label{p-saturated-Cathedral}
Let $G$ be a saturated graph, and $G_0$ be a saturated elementary graph such that $G$ is built up using the Cathedral Construction starting with $G_0$. For a set $S\in \mathcal{P}(G_0)$, 

{\rm (i)}(\cite{LP09}) $A(G-x)\cup \{x\}=S$ for each vertex $x\in S$,

{\rm (ii)} $S$ is also a barrier of $G$.
\end{lem}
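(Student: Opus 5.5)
Part (i) is cited from \cite{LP09}, so only (ii) needs an argument: $S$ is a barrier of $G$. Since $G$ is saturated it is matchable, so $\operatorname{def}(G)=0$. It therefore suffices to show $o(G-S)\geq |S|$. Parity then forces equality: if $o(G-S)>|S|$, some odd component of $G-S$ would receive no matching edge from $S$, which is impossible in a matchable graph.

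\textbf{Choice of barrier.} The plan is to pick a vertex $x\in S$ and use (i), namely $S=A(G-x)\cup\{x\}$. By Lemma \ref{A(G)-barrier}, $A:=A(G-x)$ is a barrier of $G-x$, so
$$o\bigl((G-x)-A\bigr)=|A|+\operatorname{def}(G-x).$$

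\textbf{Deficiency of $G-x$.} $G-x$ has an odd number of vertices. Any perfect matching of $G$ restricts to a matching of $G-x$ missing only the partner of $x$ and $x$'s own removal, so it covers all but one vertex. Hence $G-x$ has a near-perfect matching and $\operatorname{def}(G-x)=1$.

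\textbf{Counting odd components.} Because $x\notin A$, we have $(G-x)-A=G-(A\cup\{x\})=G-S$ and $|A|=|S|-1$. Substituting into the barrier equation,
$$o(G-S)=|A|+1=|S|=|S|+\operatorname{def}(G).$$
This is exactly the statement that $S$ is a barrier of $G$.

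\textbf{Remaining checks.} Two points need care, and this is where the main (mild) obstacle lies.
\begin{enumerate}
\item We need $x\notin A(G-x)$. This holds trivially, since $A(G-x)\subseteq V(G-x)$.
\item We need $S$ to be nonempty so that a vertex $x$ can be chosen. This holds because $S$ is a class of the partition $\mathcal{P}(G_0)$.
\end{enumerate}
The only substantive ingredient is (i) itself, which is taken from \cite{LP09}. With both checks in place the argument is complete.
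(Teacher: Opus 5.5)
Your argument is correct and is essentially the paper's proof: fix $x\in S$, obtain $\operatorname{def}(G-x)=1$ from a perfect matching of $G$, apply Lemma \ref{A(G)-barrier} to $A(G-x)$, and use (i) to rewrite $(G-x)-A(G-x)$ as $G-S$ with $|A(G-x)|=|S|-1$. Your opening reduction to showing $o(G-S)\geq |S|$ is harmless but unnecessary, since your computation gives the equality $o(G-S)=|S|$ directly.
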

\begin{proof}
We prove (ii). Let $x\in S$, and $H=G-x$. Assume that $M$ is a perfect matching of $G$. Then there is a vertex $y$ of $G$ so that $xy\in M$. Since each vertex of $H$ other than $y$ is incident with exactly one edge of $M\setminus \{xy\}$,  $\operatorname{def}(H) = \max\big\{ o(H-S) - |S| \,\big|\, S \subseteq V(H)\}=1$. Since $A(H)$ is a barrier of $H$ by Lemma \ref{A(G)-barrier}, $o(H-A(H))-|A(H)|=1$. 
By (i), $o(G-S)=o(G-x-A(H))=o(H-A(H))=1+|A(H)|=1+|S\setminus \{x\}|=|S|$. So $S$ is a barrier of $G$, and (ii) holds.
\end{proof}

\subsection{Bisubdivision and bicontraction }

To \emph{subdivide} an edge $e=uv$ is to delete $e$, add a new vertex $x$, and add edges $ux$ and $xv$, that is, $uv$ becomes the path $uxv$. 
Any graph derived from a graph $G$ by a sequence of edge subdivisions is called 
a \emph{subdivision} of $G$. An \emph{even subdivision} of $G$ is a subdivision in which an even number of vertices are inserted on each edge of $G$.

\begin{proper}[\cite{BM}]\label{p-planar}
Let $G$ be a graph. Then 

{\rm (i)} $G$ is planar if and only if every subdivision of $G$ is planar, 

{\rm (ii)} If $G$ is planar, then any graph obtained from $G$ by a sequence of vertex and edge deletions and edge contractions is also planar. 
\end{proper}

The \emph{bicontraction of a vertex $v$} of degree two, with precisely two different neighbors $v_1$ and $v_2$, in a graph  consists of shrinking the set $\{v,v_1,v_2\}$ to a single vertex. 
The \emph{retract} of a matching covered graph $G$, denoted by $\widehat{G}$, is the graph obtained from $G$ by the following recursive procedure: if $|V(G)|=2$ or $\delta(G)\geq 3$, then $\widehat{G}:=G$, and otherwise $\widehat{G}:=\widehat{G_v}$, where $G_v$ is the graph obtained by bicontracting 2-degree vertex $v$ in $G$. The retract of a matching covered graph is unique up to isomorphism. Clearly, $\widehat{G}$ is also matching covered.  From the definition, we easily obtain the following two properties.

\begin{proper}\label{bicontract}
Let $G$ be a graph. Then 

{\rm (i)} $\Phi(G)=\Phi(\widehat{G})$, 

{\rm (ii)} For an edge $e=uv$ of $G$, $\Phi(G)=\Phi(\widehat{G-e})+\Phi(G-\{u,v\})$.
\end{proper}

Denote by $\mathcal{F}_n$ the class of all bipartite matching covered graphs $G$ with $n$ vertices and $\delta(G)\geq 3$. 

\begin{proper}[\cite{CLM}]\label{small}
Let $G\in \mathcal{F}_n$. Then

{\rm (i)} $\Phi(G)\geq 12$ when $n\in \{10,12\}$,

{\rm (ii)} If $n=8$, then $\Phi(G)\geq 12$ unless 
$G\in \{B_8,B_8^+, P_4\oplus K_{3,3}\}$ (see Fig. \ref{small-graph}).
\end{proper}
\begin{figure}
\centering
\includegraphics{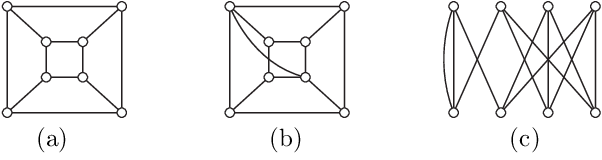}
\caption{\label{small-graph} (a)\ $B_8$, (b)\ $B_8^+$, (c)\ $P_4\oplus K_{3,3}$.}
\end{figure}

\begin{proper}\label{p}
Let $G\in\mathcal{F}_n$, $n\geq 14$. Then $\Phi(G)\geq 12$.  
\end{proper}
\begin{proof}
Since $G$ is matching covered and $G\neq K_2$, $G$ contains a cycle. By Lemma \ref{removable}, $G$ has a removable edge, say $e$. Then $G':=G-e$ is also matching covered. Since $\delta(G)\geq 3$, the possible 2-degree vertices of $G'$ can only be the two ends of $e$. So  $\widehat{G'}\in \mathcal{F}_n$ ($n\geq 10$). If $|V(\widehat{G'})|\in \{10,12\}$, then $\Phi(\widehat{G'})\geq 12$ by Property  \ref{small}(i).   
If $|V(\widehat{G'})|\geq 14$, by using the above argument and
Property \ref{bicontract}(ii) repeatedly, we obtain $\Phi(\widehat{G'})\geq min\{f(n-2),f(n-4)\}\geq min\{f(n-4),f(n-6)\}\geq \cdots \geq min\{f(12),f(10)\}\geq 12$, where $f(n)=min\{\Phi(G): G\in \mathcal{F}_n\}$. By Property \ref{bicontract}(ii), we have $\Phi(G)\geq 12$. 
\end{proof}

\section{\normalsize Proof of Theorem \ref{main}}
We only need to consider simple graphs. Let $G$ be a matchable simple graph on $n$ vertices. Let $G^*$ be the saturated graph containing $G$ as a spanning subgraph, which is obtained by adding edges to $G$ as long as no new perfect matchings are generated. Then each edge of $G^*$ in $E(G^*)\setminus E(G)$ is forbidden, and so $\Phi(G)=\Phi(G^*)$. If $G^*$ is non-elementary, then $G^*$ can be built up using the Cathedral Construction starting with a saturated elementary graph $G^*_0$ by Theorem \ref{Cathedral}(c). Otherwise, we set $G^*_0=G^*$. 

If $G^*_0$ is bicritical, then each class in $\mathcal{P}(G^*_0)$ is a singleton by Lemma \ref{p-matchable}(i). Since $G$ is 3-connected, $G^*$ is also 3-connected. Together with the Cathedral Construction, we obtain that the tower $G^*_S$ over $S$ is empty for each set $S\in \mathcal{P}(G^*_0)$. So $G^*_0=G^*$. Since $G^*_0$ is bicritical, each edge of $G^*_0$ is allowed. So $G^*_0=G^*=G$. 
By Lemma~\ref{formula}, $$\Phi(G)\geq\frac{|E(F(G))|-n}{2}+2=\frac{|E(G)|-n}{2}+2.$$ Since $G$ is 3-connected, $\delta(G)\geq3$.
Then $2|E(G)|\geq 3n$, which gives $\Phi(G)\geq\frac{n}{4}+2$. As $n\geq 40$, $\Phi(G)\geq 12$, which proves the theorem. 

In the following, suppose that $G^*_0$ is non-bicritical. Then $G^*_0$ has a nontrivial barrier by Lemma \ref{p-matchable}(i). 
Let $S$ be a nontrivial barrier in $\mathcal{P}(G^*_0)$. Together with the fact that $G^*_0$ is elementary, we have that $H:=G^*_0(S)$ is elementary by Lemma \ref{saturated-ele}. 
Further, $H$ is matching covered by Theorem \ref{p-ele}.  
Since each component of $G^*_0-S$ is factor-critical by Lemma \ref{p-matchable}(ii), each edge of $G^*_0$ corresponding to an edge of $H$ is allowed. So each perfect matching of $H$ can be extended to a perfect matching of $G^*_0$. By Theorem \ref{Cathedral}(b), each edge of $H$ corresponds to an allowed edge of $G$, and each perfect matching of $H$ can be extended to a perfect matching of $G^*$. It follows that $\Phi(G^*)=\Phi(G)\geq\Phi(H)$. We will show that $\Phi(H)\geq 12$. 

Since $G$ is planar, $H$ is also planar by  Property \ref{p-planar}(ii).   
By Lemma \ref{p-saturated-Cathedral}(ii), $S$ is also a nontrivial barrier of $G^*$. Since $o(G^*-S)=|S|\geq 2$, $G^*-S$ is disconnected. Further, since $G^*$ is 3-connected, we have $|S|\geq 3$. Then $|V(H)|\geq 6$. 
Assume that $S=\{a_1,a_2,\ldots,a_k\}$ and $T=V(H)\setminus S=\{b_1,b_2,\ldots,b_k\}$, where $k\geq 3$.

\vspace{8pt}\noindent
{\textbf{Claim 1.}} For any $u\in T$, $d_H(u)\geq |N_H(u)|\geq 3$.

\begin{proof}
Let $K$ be a component of $G^*_0-S$. Since $S$ is a barrier of $G^*$ (Lemma \ref{p-saturated-Cathedral}(ii)), $K$ is contained in some component of $G^*-S$, say $K^*$. Since $G^*$ is 3-connected, $|N_{G^*}(V(K^*))|\geq 3$. By the Cathedral Construction, $N_{G^*}(V(K^*))=N_{G^*_0}(V(K))$. As $N_H(u)=N_{G^*_0}(V(K))$,  $d_H(u)\geq|N_H(u)|\geq 3$. 
\end{proof}

\noindent{\textbf{Claim 2.}} Let $b\in V(\widehat{H})\setminus V(H)$. 
If $b$ has exactly two neighbors, say $a_1$ and $a_2$, then $|E_{\widehat{H}}(\{b\},\{a_i\})|\geq 2$ for $i=1,2$.
\begin{proof}
Suppose to the contrary that $|E_{\widehat{H}}(\{b\},\{a_1\})|\leq 1$. 
Let $S'$ be the set of all 2-degree vertices in $H$ that are bicontracted to form $b$, and $T'=N_H(S')$. By Claim 1, $S'\subseteq S$. Then $|S'|+1=|T'|$, and  
$|E_H(S',T')|=2|S'|=2(|T'|-1)$. 
On the other hand, Claim 1 implies that at least $|T'|-1$ vertices in $T'$ have at least two neighbors in $S'$, while the remaining vertex of $T'$ has at least one neighbor in $S'$. So $|E_H(S',T')|\geq 2(|T'|-1)+1=2|T'|-1$, a contradiction. Hence Claim 2 holds. 
\end{proof}

Next we divide the proof into three cases based on $|V(H)|$. 

{\textbf{Case 1.}} $|V(H)|\leq 8$.

If $|V(H)|=6$, then $H_{\text{sim}}\cong K_{3,3}$ by Claim 1, where $K_{3,3}$ is the  complete bipartite graph on 6 vertices. So  $H$ is nonplanar, a contradiction. Thus  $|V(H)|=8$. If $\delta(H)=2$, then all 2-degree vertices of $H$ are in $S$ by Claim 1. Let $a_1$ be a 2-degree vertex of $H$, and $N_H(a_1)=\{b_1,b_2\}$. By Claim 1, $N_H(b_3)=N_H(b_4)=\{a_2,a_3,a_4\}$. If $\{a_2,a_3,a_4\}\subseteq N_H(\{b_1,b_2\})$, then $\widehat{H}_{\text{sim}}\cong K_{3,3}$, and so $\widehat{H}_{\text{sim}}$ is nonplanar, a contradiction to Property  \ref{p-planar}(ii).   
So $\{a_2,a_3,a_4\}\nsubseteq N_H(\{b_1,b_2\})$. By Claim 1, we may assume that $N_H(b_1)=N_H(b_2)=\{a_1,a_2,a_3\}$ (see Fig. \ref{8}(a)). Since $\delta(G)\geq 3$, $a_1$ is adjacent to a vertex in $S\setminus\{a_1\}$ or in the tower $G^*_S$ over $S$ by  the Cathedral Construction. If $a_1a_4\in E(G)$, then $H+a_1a_4$ contains a subdivision of $K_{3,3}$ with bipartite sets $\{a_2,a_3,a_4\}$ and $\{a_1,b_3,b_4\}$ (see Fig. \ref{8}(b)). As $K_{3,3}$ is nonplanar, $H+a_1a_4$ is nonplanar by Property \ref{p-planar}. But, since $G$ is planar, $H+a_1a_4$ is  planar by Property \ref{p-planar}(ii), a contradiction.   
If $a_1$ is adjacent to some vertex in $G^*_S$, then $a_1$ and at least two vertices from $\{a_2,a_3,a_4\}$ each have a neighbor in the connected subgraph $G_S^*\cap G$, because $G$ is 3-connected.  
Then the graph, obtained from $H\cup (G^*_S\cap G)\cup E_G(S,V(G^*_S))$ by contracting $V(G^*_S)$ to a single vertex, is nonplanar since it contains a subdivision of $K_{3,3}$ (see Fig. \ref{8}(c) and (d)), contradicting Property \ref{p-planar}. 
So $\emptyset\neq N_G(a_1)\setminus \{b_1,b_2\}\subseteq\{a_2,a_3\}$. But thus $(H\cup E_G(\{a_1\},\{a_2,a_3\}))-\{a_2,a_3\}$ has exactly two components, one with vertex set $\{a_1,b_1,b_2\}$ and the other with vertex set $\{a_4,b_3,b_4\}$. This implies that $\{a_2,a_3\}$ is a 2-vertex cut of $G$, a contradiction. Hence $\delta(H)\geq 3$. 
Since $H$ is planar, either $H\cong B_8$ or $\Phi(H)\geq 12$ by Property \ref{small}(ii). So we assume that $H\cong B_8$. We will prove that $\Phi(G^*)\geq 12$. 

\begin{figure}
\centering
\includegraphics{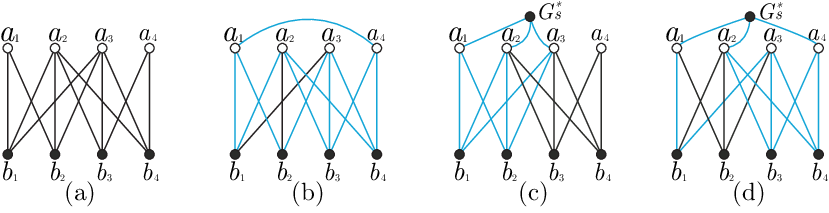}
\caption{\label{8} Illustration for the case that $|V(H)|=8$ and $\delta(H)=2$ in the proof of Theorem \ref{main}.}
\end{figure}

If $G^*_S$ is not empty, then the (planar) graph, obtained from $H\cup (G^*_S\cap G)\cup E_G(S,V(G^*_S))$ by contracting $V(G^*_S)$ to a single vertex, contains a subdivision of $K_{3,3}$ since $G$ is 3-connected, a contradiction to Property \ref{p-planar}. 
So $G^*_S$ is empty. Let $K$ be a component of $G^*_0-S$ with the largest number of vertices. If $K$ is trivial, then each component of $G_0^*-S$ is trivial. Since $G^*$ is 3-connected, $G_0^*=G^*$ by the Cathedral Construction. Since each component of $G_0^*-S$ is odd by Lemma \ref{p-matchable}(ii), $|V(G^*)|=n=8$, a contradiction. So $K$ is nontrivial. Assume that $G_1$ is the graph obtained from $G^*_0$ by contracting $V(G^*_0)\setminus V(K)$ to a single vertex $\overline{k}$. Then $d_{G_1}(\overline{k})=3$, and $|V(G_1)|\geq 4$. Since each edge of $H$ is allowed in $G^*$, each edge of $G_1$ incident with $\overline{k}$ is allowed. Since each edge of $B_8$ is contained in exactly 3 perfect matchings,  $\Phi(G^*)\geq \Phi(G_0^*)\geq 3\Phi(G_1)$ by Theorem \ref{Cathedral}(b). We then show that $\Phi(G_1)\geq 4$.  

Let $G_1'$ be the subgraph of $G_1$ formed by all allowed edges. Since $G_1$ is also elementary, $G_1'$ is matching covered. Then $\widehat{G_1'}$ is also matching covered,  and $\delta(\widehat{G_1'})\geq 3$. If $\Phi(\widehat{G_1'})=3$, then $\widehat{G_1'}\cong K_4$ or $\widehat{G_1'}$ is $K_2$ with exactly 3 multiple edges by Lemma \ref{3-PMs}. So either $G_1'\cong K_4$ or $G_1'$ is an even subdivision of $\widehat{G_1'}$. 
For the former, $G_1'=G_1\cong K_4$, which means that $K$ is a triangle. Then each  component of $G^*_0-S$ is a triangle or a singleton. So each class of $\mathcal{P}(G^*_0)\setminus S$ is a singleton. Since $G^*$ is 3-connected, we have that $G^*_0=G^*$ by the Cathedral Construction. 
It follows that $|V(G^*)|\leq 4+3\times4=16$, a contradiction. 
For the latter, each vertex of $\widehat{G_1'}$ constitutes a maximal barrier.  
By Lemma \ref{subdivide-barrier}(i), there is a class $S_1\in \mathcal{P}(G_1)$ such that $S_1$ contains a 2-degree vertex (say $w$) of $G_1'$, and $\overline{k}\notin S_1$. Let $K^*$ be the component of $G^*-S$ containing $K$, and $G_1^*$ be the graph obtained from $G^*$ by contracting $V(G^*)\setminus V(K^*)$ to a single vertex, also denoted by $\overline{k}$.  
As $d_{G}(w)\geq 3$, $w$ is incident with a forbidden edge of $G_1^*$, say $ww'$. 
If $w'\in V(K)$, then $w'\in S_1$ by Lemma \ref{subdivide-barrier}(ii). Tutte's theorem implies that $o(G_1^*-\{w,w'\})>2$. 
So $\{w,w'\}$ is a 2-vertex cut of $G_1^*$. As $\overline{k}\notin \{w,w'\}$, $\{w,w'\}$ 
is also a 2-vertex cut of $G^*$, a contradiction. If $w'\in V(K^*)\setminus V(K)$, then the same contradiction appears similarly. So $\Phi(\widehat{G_1})\geq 4$. By Property \ref{bicontract}(i), $\Phi(G_1)\geq 4$, which completes the proof in this case.

\begin{figure}
\centering
\includegraphics{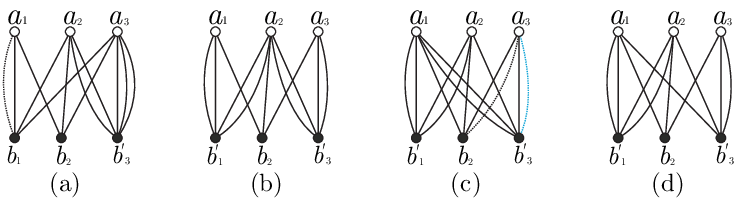}
\caption{\label{101} Illustration for the case that $|V(\widehat{H})|=6$ of Case 2 in the proof of Theorem \ref{main}.}
\end{figure}

{\textbf{Case 2.}} $|V(H)|=10$.

If $\delta(H)\geq 3$, then $\Phi(H)\geq 12$ by Property \ref{small}(i). So we assume that $\delta(H)=2$. Then all 2-degree vertices of $H$ belong to $S$ by Claim 1. 
By Property  \ref{p-planar}(ii), $\widehat{H}$ is planar. 
Claim 2 implies $|V(\widehat{H})|\geq 4$. If $|V(\widehat{H})|=8$, then the unique vertex of $\widehat{H}$, obtained by bicontracting one 2-degree vertex, has degree at least 4 by Claim 1. So $\Phi(H)\geq 12$ by Property \ref{small}(ii). If $|V(\widehat{H})|=6$, then $\widehat{H}$ contains either 1 or 2 vertices, obtained by bicontracting 2-degree vertices of $H$. For the former, assume that $V(\widehat{H})=\{a_1,a_2,a_3,b_1,b_2,b_3'\}$. By Claim 1,  $N_{\widehat{H}}(b_1)=N_{\widehat{H}}(b_2)=\{a_1,a_2,a_3\}$. Since $\widehat{H}$ is planar, we may assume that $N_{\widehat{H}}(b_3')=\{a_2,a_3\}$. Since $b_3'$ is obtained by bicontracting two 2-degree vertices of $H$, $d_{\widehat{H}}(b_3')\geq 5$ by Claim 1. By Claim 2, $|E_{\widehat{H}}(\{b_3'\},\{a_i\})|\geq 2$ and $|E_{\widehat{H}}(\{b_3'\},\{a_{5-i}\})|\geq 3$ for some $i\in \{2,3\}$, say $i=2$ (see Fig. \ref{101}(a)). 
As $\delta(\widehat{H})\geq 3$, $|E_{\widehat{H}}(\{a_1\},\{b_j\})|\geq 2$ for some $j\in \{1,2\}$. Then $\Phi(\widehat{H})\geq 2\times(3+2)+3+2=15$. 
For the latter, assume that $V(\widehat{H})=\{a_1,a_2,a_3,b_1',b_2,b_3'\}$. As $\widehat{H}$ is planar, $|N_{\widehat{H}}(b_i')\cap \{a_1,a_2,a_3\}|=2$ for some $i\in\{1,3\}$, say $N_{\widehat{H}}(b_1')=\{a_1,a_2\}$. By Claim 2, $|E_{\widehat{H}}(\{b_1'\},\{a_l\})|\geq 2$ for $l=1,2$. 
If $|N_{\widehat{H}}(b_3')\cap \{a_1,a_2,a_3\}|=2$, say $N_{\widehat{H}}(b_3')=\{a_2,a_3\}$ (see Fig. \ref{101}(b)). Claim 2 implies $\Phi(\widehat{H})\geq 2\times(2+2)+2\times2=12$. If $|N_{\widehat{H}}(b_3')\cap \{a_1,a_2,a_3\}|=3$, then $|E_{\widehat{H}}(\{b_3'\},\{a_i\})|\geq 2$ for some $i\in \{1,2,3\}$ as $d_{\widehat{H}}(b_3')\geq 4$ (see Fig. \ref{101}(c) and (d)). Similarly, we have $\Phi(\widehat{H})\geq min\{2\times3+2\times5,2\times3+2\times4,2\times3+2\times3\}=12$.

Now we assume that $|V(\widehat{H})|=4$. By Claim 1, 
$\widehat{H}$ is obtained from $H$ by bicontracting three 2-degree vertices in $S$, say $a_1$, $a_4$ and $a_5$. Denote by  $b_1'$ (resp. $b_2'$) the vertex of $\widehat{H}$ obtained by bicontracting 2-degree vertex $a_1$ (resp. vertices $a_4$ and $a_5$). 
Set $N_H(a_1)=\{b_1,b_2\}$ and $N_H(a_5)=\{b_4,b_5\}$. Then $N_H(b_3)=\{a_2,a_3,a_4\}$ by Claim 1. Further, we may assume that $\{a_3,a_4\}\subseteq N_H(b_5)$. Then $N_H(b_1)=N_H(b_2)=\{a_1,a_2,a_3\}$ and $N_H(b_4)=\{a_2,a_3,a_5\}$ by Claim 1 (see Fig. \ref{10}(a)). If $a_1a_5\in E(G)$, then $H+a_1a_5$ contains a subdivision of $K_{3,3}$ with bipartite sets $\{a_1,a_2,a_3\}$ and $\{b_1,b_2,b_5\}$ (see Fig. \ref{10}(b)), a contradiction to Property \ref{p-planar}. 
If $a_1a_4\in E(G)$ or $a_1$ has a neighbor of $G^*_S$, then $H+a_1a_4$ or the graph, obtained from $H\cup (G_S^*\cap G)\cup E_G(S,V(G_S^*))$ by contracting $V(G_S^*)$ to a single vertex, contains a subdivision of $K_{3,3}$, a contradiction again. As $\delta(G)\geq 3$, $\emptyset\neq N_G(a_1)\setminus \{b_1,b_2\}\subseteq \{a_2,a_3\}$. Then $\{a_2,a_3\}$ is a 2-vertex cut of $H\cup E_G(\{a_1\},\{a_2,a_3\})$, which is also a 2-vertex cut of $G^*$, a contradiction. 

\begin{figure}[h]
\centering
\includegraphics{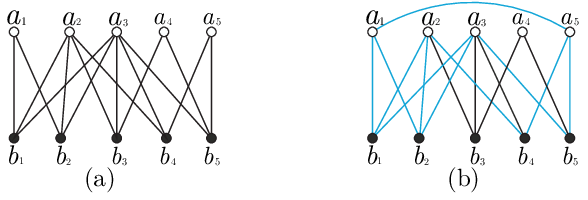}
\caption{\label{10} Illustration for the case that $|V(\widehat{H})|=4$ of Case 2 in the proof of Theorem \ref{main}.}
\end{figure}

{\textbf{Case 3.}} $|V(H)|\geq 12$.

If $\delta (H)\geq 3$, then $\Phi(H)\geq 12$ by Properties \ref{small}(i) and \ref{p}.  
So we assume that $\delta(H)=2$. If $|V(\widehat{H})|\geq 8$, then we obtain $\Phi(\widehat{H})\geq 12$ by Properties \ref{small} and \ref{p}. If $|V(\widehat{H})|=6$, then $|E(\widehat{H}_{\text{sim}})|\geq 6$ since $\widehat{H}_{\text{sim}}$ is matching covered. By Euler's formula, $|E(\widehat{H}_{\text{sim}})|\leq 2\times6-4=8$. Note that $\widehat{H}_{\text{sim}}$ is obtained by deleting the edges of a  matching of size 1, 2 or 3 from $K_{3,3}$. If $|E(\widehat{H}_{\text{sim}})|=6$, then $\widehat{H}_{\text{sim}}$ is a cycle. By Claim 1, each vertex in $V(\widehat{H}_{\text{sim}})\setminus S$ is obtained by the bicontraction of 2-degree vertices of $H$. By Claim 2, we have $\Phi(\widehat{H})\geq 2^3+2^3=16$ (see Fig. \ref{6-splitting}(a)). If $|E(\widehat{H}_{\text{sim}})|=7$, then $\Phi(\widehat{H})\geq 2\times2+2\times(2+2)=12$ by Claims 1 and 2  (see Fig. \ref{6-splitting}(b)). If $|E(\widehat{H}_{\text{sim}})|=8$, then $\Phi(\widehat{H})\geq 2\times(2+2)+(2+2)=12$ (see Fig. \ref{6-splitting}(c)). 

\begin{figure}
\centering
\includegraphics{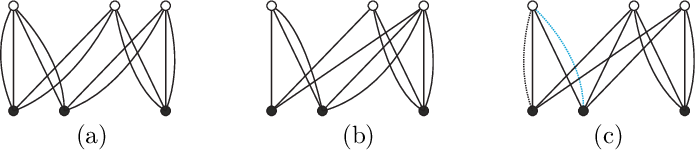}
\caption{\label{6-splitting} (a)\ $|(E(\widehat{H}_{\text{sim}})|=6$, (b)\ $|(E(\widehat{H}_{\text{sim}})|=7$, (c)\ $|(E(\widehat{H}_{\text{sim}})|=8$.}
\end{figure}

If $|V(\widehat{H})|=4$, then $H$ has at least four 2-degree vertices. Set $V(\widehat{H})=\{a_1,a_2,b_1',b_2'\}$. Assume that $|E_{\widehat{H}}(\{b_i'\},\{a_i\})|=x_i$ for $i=1,2$. Then $2\leq x_i\leq d_{\widehat{H}}(b_i')-2$ by Claim 2. 
If $b_i'$ is obtained by bicontracting at least two 2-degree vertices for $i=1,2$, then $d_{\widehat{H}}(b_i')\geq 5$ by Claim 1. So $\Phi(\widehat{H})\geq x_1x_2+(5-x_1)(5-x_2)$, where $2\leq x_1,x_2\leq 3$. A simple computation yields $\Phi(\widehat{H})\geq 12$. If $b_j'$ is obtained by bicontracting one 2-degree vertex for some $j\in \{1,2\}$, then $d_{\widehat{H}}(b_j')\geq 4$ and $d_{\widehat{H}}(b_{3-j}')\geq 6$ by Claim 1. So $\Phi(\widehat{H})\geq 2x_{3-j}+2\times(6-x_{3-j})=12$ by Claim 2, where $2\leq x_{3-j}\leq 4$. 

Consequently, we have $\Phi(H)=\Phi(\widehat{H})\geq 12$ by Property \ref{bicontract}(i), and the proof is completed.


\begin{thebibliography}{99}
\small \setlength{\itemsep}{-.2mm}
\bibitem{BM}J. A. Bondy, U. S. R. Murty, Graph Theory, Springer-Verlag, Berlin, 2008.

\bibitem{Berge}C. Berge, Sur le couplage maximum d'un graphe, C. R. Acad. Sci. Paris S\'{e}r. I Math. 247 (1958) 258-259.

\bibitem{CLM}M. H. de Carvalho, C. L. Lucchesi, U. S. R. Murty, On the number of perfect matchings in a bipartite graph, SIAM J. Discrete Math. 27 (2013) 940-958.

\bibitem{CLM14}M. H. de Carvalho, C. H. C. Little, Matching covered graphs with three removable classes, Electron. J. Combin. 21 (2014) \#P2.13.

\bibitem{CS}M. Chudnovsky, P. Seymour, Perfect matchings in planar cubic graphs. Combinatorica 32 (2012) 403-424.

\bibitem{EKKKN}L. Esperet, F. Kardo\v{s}, A. D. King, D. Kr\'al, S. L. Norine, Exponentially many perfect
matchings in cubic graphs, Adv. Math. 227 (2011) 1646-1664.

\bibitem{EPL}J. Edmonds, W. R. Pulleyblank, L. Lov\'asz, Brick decompositions and the matching rank of graphs, Combinatorica 2 (1982) 247-274.

\bibitem{GJVZ}J. Goedgebeur, J. Jooken, T. Van den Eede, C. T. Zamfirescu. On the number of perfect matchings in planar graphs, https://doi.org/10.48550/arXiv.2606.22253.

\bibitem{Hetyei}G. Hetyei, Rectangular configurations which can be covered by $2\times 1$ rectangles, P\'{e}csi Tan. F\H{o}isk. K\"{o}zl. 8 (1964) 351-367. (Hungarian)

\bibitem{HLZ}X. He, F. Lu, H. Zhang, Adjacent vertices of small degree in minimal matching covered graphs, https://doi.org/10.48550/arXiv.2604.00361. 

\bibitem{Kasteleyn}P. W. Kasteleyn, Graph theory and crystal physics, in: Graph Theory and Theoretical Physics, Academic Press, 1967, pp. 43-110.

\bibitem{L72}Lov\'{a}sz, On the structure of factorizable graphs, Acta Mat. Hungar. 23 (1972) 179-195.

\bibitem{LP09}L. Lov\'{a}sz, M. D. Plummer, Matching Theory, Ann. Discrete Math., Vol. 29, North-Holland, Amsterdam, 1986; AMS Chelsea Publishing, Providence, RI, 2009.

\bibitem{Zaks}J. Zaks, On the 1-factors of $n$-connected graphs, J. Combin. Theory Ser. B 11 (1971) 169-180.

\end{thebibliography}
\end{document}